\DeclareMathOperator{\tr}{tr}
\newtheorem{theorem}{Theorem}[section]
\newtheorem{lemma}[theorem]{Lemma}
\newtheorem{proposition}[theorem]{Proposition}
\theoremstyle{definition}
\newtheorem{definition}[theorem]{Definition}
\newtheorem{example}[theorem]{Example}
\newtheorem{notation}[theorem]{Notation}
\theoremstyle{remark}
\newtheorem{remark}[theorem]{Remark}
\DeclareMathOperator{\diag}{diag}
\DeclareMathOperator{\sech}{sech}
\DeclareMathOperator{\Hess}{Hess}
\DeclareMathOperator{\Gr}{Gr}
\DeclareMathOperator{\St}{St}
\title{Constructing entire minimal graphs by evolving planes}
\subjclass{Primary 58E12, 58E15, Secondary 35J47, 35J60}
\author{Chung-Jun Tsai}
\address{Department of Mathematics, National Taiwan University, and National Center for Theoretical Sciences, Math Division, Taipei 10617, Taiwan}
\email{cjtsai@ntu.edu.tw}
\author{Mao-Pei Tsui}
\address{Department of Mathematics, National Taiwan University, and National Center for Theoretical Sciences, Math Division, Taipei 10617, Taiwan}
\email{maopei@math.ntu.edu.tw}
\author{Jingbo Wan}
\address{Laboratoire Jacques-Louis Lions de Sorbonne Universit\'e, 4 place Jussieu, Paris 75005, France}
\email{jingbo.wan@sorbonne-universite.fr}
\author{Mu-Tao Wang}
\address{Department of Mathematics, Columbia University, New York, NY 10027, USA}
\email{mtwang@math.columbia.edu}
\thanks{C.-J.~Tsai is supported in part by the National Science and Technology Council grant 112-2628-M-002-004-MY4.  M.-P.~Tsui is supported in part by the National Science and Technology Council grant 112-2115-M-002-015-MY3 and 113-2918-I-002-004. M.-T. ~Wang is supported in part by the National Science Foundation under Grants DMS-2104212 and DMS-2404945. This work was supported by grants from the Simons Foundation [SFI-MPS-SFM-00006056 and MPS-TSM-00007411, M.-T.~W.]. Part of this work was carried out when M.-T.~Wang was visiting the Institute of Mathematics, Academia Sinica. The authors would like to thank Professor Robert Bryant for bringing to their attention the correspondence between entire austere graphs and entire special Lagrangian graphs, which is central to the formulation of  Theorem \ref{thm:entire_SPL}.} 
\begin{document}

\begin{abstract}
We introduce an evolving-plane ansatz for the explicit construction of entire minimal graphs of dimension $n$ ($n\geq 3$) and codimension $m$ ($m\geq 2$), for any odd integer $n$. Under this ansatz, the minimal surface system reduces to the geodesic equation on the Grassmannian in affine coordinates. Geometrically, this equation dictates how the slope of an $(n-1)$ plane evolves as it sweeps out a minimal graph. This framework yields a large family of explicit entire minimal graphs of odd dimension $n$ and arbitrary codimension $m$.
For each entire minimal graph, its conormal bundle gives rise to an entire special Lagrangian graph in $\mathbb{C}^{n+m}$.

\end{abstract}
\maketitle
\section{Introduction}

Entire solutions---those defined on all of Euclidean space---play a central role in the study of elliptic partial differential equations. 
The classical Liouville theorem and Bernstein theorem characterize entire solutions of the Laplace equation and of the minimal surface equation, respectively. A landmark result in minimal surface theory is the well-known Bombieri–De~Giorgi–Giusti \cite{BDG} construction of a non-trivial entire solution to the minimal surface equation on $\mathbb{R}^8$. A broad generalization of the minimal surface equation is the minimal surface system which governs minimal submanifolds of higher codimensions, but remains comparatively underdeveloped.
In this work, we introduce an evolving-plane ansatz that reduces the minimal surface system to the geodesic equation on the Grassmannian and produces explicit entire minimal graphs of dimension $n$ and codimension $m$ where $n\geq3$ is odd and $m\geq 2$. Although entire minimal graphs are known in even dimensions, such as those arising from entire holomorphic functions on complex Euclidean space, this work provides what seems to be  the first general construction of entire minimal graphs of any odd dimension. 

We begin with the definition of minimal graphs: 

\begin{definition}
Let $n, m \geq 1$ be integers. Let $\Omega\subset\mathbb{R}^{n}$ be a domain and 
\[
(f^\alpha)_{\alpha=1,\ldots,m}:\Omega\to\mathbb{R}^m
\]
be a smooth vector-valued function. The \emph{graph}
\[
\Gamma \;=\;\bigl\{(x,f^\alpha(x)) : x\in\Omega \bigr\}
\;\subset\;\mathbb{R}^{n}\times\mathbb{R}^m=\mathbb{R}^{n+m},
\]
is called a \emph{minimal graph of dimension $n$} and \emph{codimension $m$}
if its mean curvature vanishes identically. 
\end{definition}
The equation can be derived by the first variation formula for the volume functional. In the $m=1$ case, the minimal surface equation for an unknown function $f$ defined on $\mathbb{R}^n$ is:
\[
\sum_{i}\partial_{i}\Bigl(\frac{\partial_{i}f}{\sqrt{1 + |\nabla f|^2}}\Bigr)=0 .
\]

In the $m=2$ case, the {\em minimal surface system} for two unknown functions $f, g$ defined on $\mathbb{R}^n$ is:
\[
\begin{cases}
\displaystyle
\sum_{i}\partial_{i}\Bigl(
\frac{(1 + |\nabla g|^2)\,\partial_{i}f \;-\; (\nabla f\cdot\nabla g)\,\partial_{i}g}
{\sqrt{(1 + |\nabla f|^2)\,(1 + |\nabla g|^2) - (\nabla f\cdot\nabla g)^2}}
\Bigr)=0,\\[8pt]
\displaystyle
\sum_{i}\partial_{i}\Bigl(
\frac{(1 + |\nabla f|^2)\,\partial_{i}g \;-\; (\nabla f\cdot\nabla g)\,\partial_{i}f}
{\sqrt{(1 + |\nabla f|^2)\,(1 + |\nabla g|^2) - (\nabla f\cdot\nabla g)^2}}
\Bigr)=0.
\end{cases}
\]

We shall not rely on this particular form of the minimal surface system. However, one sees how the equations are nonlinearly coupled in a complex way. In their classical paper, Lawson and Osserman \cite{LO77} demonstrated the non-existence, non-uniqueness and irregularity of solutions to the minimal surface system. 

A different characterization of the solutions to the minimal surface system \cite{LO77} will be adopted and we will be solving the following equation:
\begin{proposition}\label{prop:minimal-basic}
Let $(f^\alpha)_{\alpha=1,\ldots,m}\colon \Omega\to\mathbb{R}^m$ be a vector-valued $C^2$ function on a domain $\Omega\subset\mathbb{R}^{n}$. Then, its graph is a minimal graph if and only if
\begin{equation}\label{eqn:mss}
g^{ij}\,\partial_i\partial_j f^\alpha \;=\; 0,
\qquad \alpha = 1,\dots,m,
\end{equation}
where $(g^{ij})$ denotes the inverse matrix of
\[
g_{ij} \;=\; \delta_{ij} \;+\;\sum_{\beta=1}^m \partial_i f^\beta\,\partial_j f^\beta ,
\]
the induced metric on the graph of $f^\alpha$.
In this case, the expression $g^{ij}\,\partial_i\partial_j f^\alpha$ is the same as $\Delta_g f^\alpha$, the Laplacian of $f^\alpha$ with respect to the induced metric $g=(g_{ij})$.
\end{proposition}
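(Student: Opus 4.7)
The plan is to work extrinsically: parametrize the graph as $F\colon\Omega\to\mathbb{R}^{n+m}$ by $F(x)=(x,f^1(x),\ldots,f^m(x))$ and exploit the standard extrinsic formula $H = g^{ij}(\partial_i\partial_j F)^{\perp}$ for the mean curvature vector of the induced isometric immersion. First I would record the tangent frame $\partial_i F = e_i + \sum_\alpha (\partial_i f^\alpha)\, e_{n+\alpha}$, verify that $\langle\partial_i F,\partial_j F\rangle$ reproduces the stated $g_{ij}$, and isolate the key structural point that $\partial_i\partial_j F = \sum_\alpha (\partial_i\partial_j f^\alpha)\, e_{n+\alpha}$ is purely \emph{vertical}, i.e., lies in $\{0\}\times\mathbb{R}^m$.

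With this setup, $H=0$ is equivalent to the vector $g^{ij}\partial_i\partial_j F$ being tangent to the graph. The crucial observation is that any vector which is simultaneously tangent to the graph and purely vertical must vanish: writing such a vector as $\sum_k c^k\partial_k F$ and reading off horizontal components forces $c^k=0$ for every $k$, after which the vertical components collapse to zero. Since $g^{ij}\partial_i\partial_j F = (0,\, g^{ij}\partial_i\partial_j f^\alpha)$ is purely vertical, the equivalence $H=0 \Longleftrightarrow g^{ij}\partial_i\partial_j f^\alpha = 0$ for all $\alpha$ follows immediately, with no need to invert $g_{ij}$ explicitly or unpack the nonlinear expressions displayed for the $m=2$ case.

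For the Laplacian identification, I would invoke the Beltrami formula $\Delta_g F = H$ valid for any isometric immersion into Euclidean space, which gives $\Delta_g f^\alpha = H^{n+\alpha}$ componentwise. Alternatively, expanding $\Delta_g f^\alpha = g^{ij}\partial_i\partial_j f^\alpha - V^k\partial_k f^\alpha$ with $V^k := g^{ij}\Gamma^k_{ij}$ and using the normality of $H$ to solve for $V^k$ in terms of the $g^{kl}\partial_k\partial_l f^\beta$, one sees that $V^k$ vanishes as soon as (\ref{eqn:mss}) holds, so the two expressions coincide on any minimal graph.

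There is no genuinely hard step in this proof; the main subtlety is to keep the ambient horizontal/vertical splitting of $\mathbb{R}^{n+m}$ cleanly separated from the tangent/normal splitting of the immersion, and to recognize that the rigid purely-vertical form of $\partial_i\partial_j F$ is what allows minimality to reduce directly to the scalar system (\ref{eqn:mss}) without any of the algebraic mess suggested by the explicit divergence form.
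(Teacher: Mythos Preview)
The paper does not actually prove Proposition~\ref{prop:minimal-basic}; it is stated as a known characterization of solutions to the minimal surface system, with a citation to Lawson--Osserman~\cite{LO77}, and is then used as a black box in Section~\ref{sec:Ansatz-MSS}. Your argument is correct and is essentially the standard derivation: the key structural point---that $\partial_i\partial_j F$ lies in $\{0\}\times\mathbb{R}^m$ while the tangent space of the graph meets this vertical subspace only in $0$---is exactly what collapses $H=0$ to the scalar system~\eqref{eqn:mss}, and your handling of the Laplacian identification via $\Delta_g F = H$ (or equivalently via $V^k = -\Delta_g x^k = -H^k$) is clean and sufficient for the ``in this case'' clause.
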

From now on, we denote the standard coordinates on $\mathbb{R}^n$ by $(x^1,\ldots, x^{n-1}, t)$.
\begin{definition}The above $(f^\alpha)_{\alpha=1,\ldots,m}$ is said to satisfy the {\em evolving-plane ansatz} if there exist $(n-1)\times m$ functions $\{z_i^\alpha(t)\}_{\alpha=1,\ldots,m; \ i=1,\ldots,n-1}$ depending on the variable $t$ such that in terms of coordinates $(x^1,\ldots,x^{n-1},t)$ on $\mathbb{R}^n$,
\begin{equation}\label{eqn:form}
f^\alpha(x^1,\ldots,x^{n-1},t)=\sum_{i=1}^{n-1} z^\alpha_i (t)x^i.
\end{equation}
\end{definition}
It is convenient to consider $t$ as a time variable.  At $t =0$, the graph of the map ${f}^\alpha$ is an $n-1$ dimensional affine subspace of $\mathbb{R}^{n+m}$. As $t$ progresses, the ansatz evolves the affine subspace and generates an $n$-dimensional submanifold. Note that the evolution of $z_i^\alpha$ governs how the plane changes its slope.  It is convenient to introduce the following definition.
\begin{definition}The $(n-1)\times m$ matrix $Z(t)=\left[z_i^\alpha(t)\right] $ is called the {\em slope matrix}.
\end{definition}

\begin{theorem} \label{mss_ode} Suppose that each $\alpha=1,\ldots, m$, \(f^\alpha\) is a $C^2$ function on a domain $\Omega\subset \mathbb{R}^n$ and of the form \eqref{eqn:form}. Then $(f^\alpha)_{\alpha=1,\ldots,m}$ satisfies the minimal surface system \eqref{eqn:mss} if and only if the slope matrix $Z(t)$ satisfies the following second order ODE system: 
\begin{equation}\label{eqn:slope_evolution}
\ddot{Z}-2\dot{Z} Z^\top (I_{n-1}+ZZ^\top )^{-1}\dot{Z}=0.
\end{equation}
\end{theorem}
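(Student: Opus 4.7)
The strategy is to substitute the ansatz into \eqref{eqn:mss} and compute the inverse metric via block matrix inversion, exploiting two simplifications: first, $\partial_i\partial_j f^\alpha$ vanishes whenever $i,j\leq n-1$, so only the $(i,t)$ and $(t,t)$ entries of $g^{-1}$ enter the PDE; second, after clearing the positive Schur-complement denominator, the equation becomes linear in $(x^1,\ldots,x^{n-1})$, so matching coefficients in $x$ produces a purely matrix-valued ODE in $t$.

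In detail, I first record $\partial_i f^\alpha = z_i^\alpha(t)$, $\partial_t f^\alpha = \sum_k \dot z_k^\alpha x^k$, $\partial_i\partial_j f^\alpha = 0$, $\partial_i\partial_t f^\alpha = \dot z_i^\alpha$, and $\partial_t^2 f^\alpha = \sum_k \ddot z_k^\alpha x^k$. Writing $x=(x^1,\ldots,x^{n-1})^\top$ and $v = \dot Z^\top x \in \mathbb{R}^m$, the induced metric takes the block form
\[
g = \begin{pmatrix} A & b \\ b^\top & 1+|v|^2 \end{pmatrix}, \qquad A = I_{n-1} + ZZ^\top,\quad b = Zv = Z\dot Z^\top x.
\]
Block inversion with Schur complement $S = 1+|v|^2 - b^\top A^{-1}b > 0$ gives $g^{tt} = 1/S$ and $g^{it} = -(A^{-1}b)_i/S$, while the $(n-1)\times(n-1)$ block is irrelevant since $\partial_i\partial_j f^\alpha=0$. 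Letting $\dot z^\alpha$ denote the $\alpha$-th column of $\dot Z$ and using the symmetry of $A$,
\[
g^{ij}\partial_i\partial_j f^\alpha \;=\; \frac{1}{S}\Bigl(x^\top \ddot z^\alpha \;-\; 2\,x^\top \dot Z Z^\top A^{-1}\dot z^\alpha\Bigr).
\]

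Since $S>0$, equation \eqref{eqn:mss} for each $\alpha$ is equivalent to the scalar identity $x^\top \ddot z^\alpha = 2\,x^\top \dot Z Z^\top A^{-1}\dot z^\alpha$. Both sides are linear in $x$ with coefficients depending only on $t$, so this holds for all $x$ if and only if $\ddot z^\alpha = 2\,\dot Z Z^\top A^{-1}\dot z^\alpha$ as vectors in $\mathbb{R}^{n-1}$. Assembling the $m$ columns recovers $\ddot Z = 2\,\dot Z Z^\top (I_{n-1}+ZZ^\top)^{-1}\dot Z$, giving both implications of the theorem.

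The calculation is essentially mechanical; the main thing to keep careful track of is the matrix shapes (note $Z$ is $(n-1)\times m$ so $ZZ^\top$ acts on $\mathbb{R}^{n-1}$ while $Z^\top Z$ acts on $\mathbb{R}^m$), and the observation that although the Schur complement $S$ is quadratic in $x$, it appears only as an overall positive factor in $g^{ij}\partial_i\partial_j f^\alpha$, so clearing it leaves an honestly linear-in-$x$ equation to which coefficient matching applies. This is the only mildly delicate point, and it is precisely what makes the ansatz collapse a nonlinear PDE system to an ODE.
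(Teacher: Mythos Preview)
Your proof is correct and follows essentially the same strategy as the paper: compute the block form of the induced metric, exploit that $\partial_i\partial_j f^\alpha=0$ for $i,j\le n-1$ so only the last row/column of $g^{-1}$ matters, use the Schur complement to handle that block, and then match coefficients of the resulting expression linear in $x$. The only cosmetic difference is that the paper packages the computation via cofactors and the block-determinant identity $\det\begin{bmatrix}A&B\\C&D\end{bmatrix}=\det(A)(D-CA^{-1}B)$, whereas you invoke the block-inverse formula directly; both routes are equivalent manifestations of the same Schur-complement bookkeeping.
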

Equation \eqref{eqn:slope_evolution} is exactly the geodesic equation on the Grassmannian \footnote{This equation first appeared explicitly in Y.-C. Wong's 1967 paper (Theorem 5 of \cite{Wong}). Later, it was more widely recognized and used in the computational geometry/optimization literature; see, for example, \cite{BZA, EAS1998}.} in affine coordinates \cite{Wong, MF1996}, see the next section.

\begin{example}
    Let $r$ be an integer with $0 \leq r \leq \min(n-1,m)$, and $\lambda_i$, $i=1,\ldots, r$ be given with $0 < \lambda_1 \leq \cdots \leq \lambda_r$, we denote the diagonal matrix \(\diag(\lambda_1, \ldots, \lambda_r)\) by,
\[
    \Lambda = \diag(\lambda_1, \ldots, \lambda_r). 
\]

A basic family of solutions to \eqref{eqn:slope_evolution} is given by the initial conditions 
\[
Z(0) = O_{(n-1) \times m}, \quad 
\dot{Z}(0) = 
\begin{bmatrix}
\Lambda &0\\[2pt]
0&0
\end{bmatrix}.
\]
Denoting $\tan(\Lambda t)=\diag(\tan(\lambda_1 t),\ldots,\tan(\lambda_r t))$, the solution to \eqref{eqn:slope_evolution} with these initial conditions is 
\[
Z(t) = 
\begin{bmatrix}
\tan(\Lambda t)& 0\\0& 0
\end{bmatrix}.
\]

\end{example}

This family corresponds to the generalized helicoids studied by Bryant \cite{Bryant} and Barbosa-Dajczer-Jorge \cite{BDJ}. These solutions blow up along the \(t\)-axis at \(t = \frac{\pi}{2\lambda_r}\) and hence are not entire graphs.

In this work, we identify a large family of initial conditions for \eqref{eqn:slope_evolution} that produce entire solutions of the minimal surface system \eqref{eqn:mss}.

\begin{notation}
Let
\[
\Lambda = \diag(\lambda_1, \ldots, \lambda_r), 
\quad 0 \leq r \leq \min(n-1,m), 
\quad 0 < \lambda_1 \leq \cdots \leq \lambda_r,
\]
and define the block matrices
\[
\widetilde{\Lambda} 
= \begin{bmatrix}
\Lambda  & 0 \\[2pt]
0 & 0
\end{bmatrix}
\quad\in \mathbb{R}^{(n-1)\times m},
\]

\[
\cos(\widetilde{\Lambda} t)
= \begin{bmatrix}
\cos(\Lambda t) & 0 \\[2pt]
0 & I_{m-r}
\end{bmatrix}
\quad\in \mathbb{R}^{m\times m},
\]
\[
\sin(\widetilde{\Lambda} t)
= \begin{bmatrix}
\sin(\Lambda t) & 0 \\[2pt]
0 & 0
\end{bmatrix}
\quad\in \mathbb{R}^{(n-1)\times m}.
\]
\end{notation}

\begin{theorem}\label{thm:entire_ODE}
 Let \(B\) be an \( (n-1)\times m\) matrix.
Suppose that
\begin{equation}\label{positivity}
\det\bigl(\cos(\widetilde{\Lambda} t)+B^\top \sin(\widetilde{\Lambda} t)\,\bigr)>0
\quad\text{for all }t.
\end{equation}
Then the unique solution $Z(t)$ of \eqref{eqn:slope_evolution} with the following initial conditions
\[
Z(0)=-B \quad\text{and}\quad \dot{Z}(0)= (I + B B^\top )^{1/2}\,\widetilde{\Lambda}\,(I + B^\top B)^{1/2}.
\] exists for all $t\in\mathbb{R}$.
\end{theorem}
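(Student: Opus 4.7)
My approach is to exhibit the unique solution of \eqref{eqn:slope_evolution} in closed form, derive it from the isometric action of $O(n-1+m)$ on the Grassmannian, and read off global existence directly. Since \eqref{eqn:slope_evolution} is the affine-coordinate geodesic equation on $\Gr(n-1,n-1+m)$ and the left action of $O(n-1+m)$ preserves the canonical metric, it sends geodesics to geodesics. Thus the geodesic with the prescribed starting point and initial velocity can be obtained by translating the basic geodesic of the paper's example by a suitable orthogonal matrix.

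First I would introduce
\[
M = \begin{bmatrix}
(I+BB^\top)^{-1/2} & B(I+B^\top B)^{-1/2}\\
-B^\top(I+BB^\top)^{-1/2} & (I+B^\top B)^{-1/2}
\end{bmatrix}\in O(n-1+m),
\]
whose orthogonality follows from the commutation identity $(I+BB^\top)^{\pm 1/2}B=B(I+B^\top B)^{\pm 1/2}$. In the Stiefel picture, the basic geodesic $Z_0(t)=\sin(\widetilde{\Lambda}t)\cos(\widetilde{\Lambda}t)^{-1}$ admits an orthogonal-complement lift $\bigl[-\sin(\widetilde{\Lambda}t)^\top,\,\cos(\widetilde{\Lambda}t)\bigr]^\top$, especially convenient because its bottom $m\times m$ block is $\cos(\widetilde{\Lambda}t)$. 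Applying $M$ to this lift and converting back to affine coordinates---using that any $m$-plane lift $[P^\top,\,Q^\top]^\top$ with invertible bottom block $Q$ represents the orthogonal complement of the $(n-1)$-plane with affine coordinate $Z=-PQ^{-1}$---yields the proposed closed form
\[
Z(t) = (I+BB^\top)^{-1/2}\bigl[\sin(\widetilde{\Lambda}t)-B\cos(\widetilde{\Lambda}t)\bigr]\bigl[\cos(\widetilde{\Lambda}t)+B^\top\sin(\widetilde{\Lambda}t)\bigr]^{-1}(I+B^\top B)^{1/2}.
\]

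Then I would verify the initial conditions directly. At $t=0$ the formula collapses (one application of the commutation identity) to $Z(0)=-B$, and differentiating once at $t=0$---where $\sin(\widetilde{\Lambda}t)$ has derivative $\widetilde{\Lambda}$ and $\cos(\widetilde{\Lambda}t)$ has derivative $0$---yields $\dot Z(0)=(I+BB^\top)^{1/2}\widetilde{\Lambda}(I+B^\top B)^{1/2}$. Local uniqueness for the second-order ODE \eqref{eqn:slope_evolution} then identifies $Z(t)$ with the unique solution of the stated Cauchy problem. The formula is smooth wherever the $m\times m$ denominator $\cos(\widetilde{\Lambda}t)+B^\top\sin(\widetilde{\Lambda}t)$ is invertible; its determinant equals $1$ at $t=0$ and, by continuity together with the hypothesis \eqref{positivity}, stays strictly positive for every $t\in\mathbb{R}$, giving global existence.

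The main obstacle is the block-matrix bookkeeping in the derivation step: acting by $M$ on the \emph{primary} Stiefel lift would naturally produce the $(n-1)\times(n-1)$ matrix $\cos(\sqrt{\widetilde{\Lambda}\widetilde{\Lambda}^\top}\,t)+\sin(\widetilde{\Lambda}t)B^\top$ as denominator, while the theorem's positivity hypothesis is phrased in terms of the $m\times m$ matrix in the formula above; one must work with the orthogonal-complement lift (or else prove the equality of the two determinants) to match \eqref{positivity}. Throughout, the commutation identity $(I+BB^\top)^{\pm 1/2}B=B(I+B^\top B)^{\pm 1/2}$ drives every simplification. A purely computational alternative---substituting the proposed formula directly into \eqref{eqn:slope_evolution} and verifying---is possible but considerably more tedious.
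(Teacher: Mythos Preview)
Your approach is correct and essentially the same as the paper's: both exhibit the closed-form solution \eqref{Z-solution}, derive it by acting with an element of $O(n-1+m)$ on the basic Stiefel geodesic, verify the initial conditions directly using $NB=BM$, and read off global existence from the positivity hypothesis \eqref{positivity}. The paper's choice of block ordering ($\mathbb{R}^m$ on top, lift $[\cos(\widetilde{\Lambda}t),\,\sin(\widetilde{\Lambda}t)]^\top$, orthogonal matrix $\begin{bmatrix}M & MB^\top\\-NB & N\end{bmatrix}$) produces the $m\times m$ denominator matching \eqref{positivity} immediately, so the orthogonal-complement detour you describe as the ``main obstacle'' is not needed there.
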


$Z(t)$ is given explicitly in \eqref{Z-solution}. When $n$ is an odd integer, there exists a rich family of pairs ($\widetilde{\Lambda}$, $B$) satisfying \eqref{positivity}, which in turn yields the following family of entire solutions of the minimal surface system. 

\begin{theorem}\label{thm:entire_odd}
Suppose $n\geq 3$ is an odd integer and $m\geq 2$. Then there exist entire solutions $(f^\alpha)_{\alpha=1,\ldots,m}: \mathbb{R}^n\rightarrow \mathbb{R}^m$ of the minimal surface system \eqref{eqn:mss} of the form $f^\alpha=\sum_{i=1}^{n-1} z^\alpha_i (t)x^i$ \eqref{eqn:form} with $Z(t)=\left[z_i^\alpha(t)\right]$ given explicitly.
\end{theorem}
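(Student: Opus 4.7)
The plan is to combine Theorems \ref{mss_ode} and \ref{thm:entire_ODE}. By Theorem \ref{mss_ode} any $f^\alpha$ of the evolving-plane form \eqref{eqn:form} whose slope matrix $Z(t)$ solves \eqref{eqn:slope_evolution} for all $t$ furnishes an entire minimal graph, and Theorem \ref{thm:entire_ODE} produces such a global-in-$t$ $Z(t)$ as soon as one exhibits $\widetilde\Lambda$ and $B\in\mathbb{R}^{(n-1)\times m}$ satisfying the positivity condition \eqref{positivity}. The theorem is therefore reduced to the following task: given odd $n\ge 3$ and $m\ge 2$, produce $\Lambda$ and $B$ for which $\det\bigl(\cos(\widetilde\Lambda t)+B^\top\sin(\widetilde\Lambda t)\bigr)>0$ for every $t\in\mathbb{R}$.

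The first step is a block-matrix simplification of \eqref{positivity}. Partition $B=\begin{pmatrix}B_{11}&B_{12}\\ B_{21}&B_{22}\end{pmatrix}$ with $B_{11}\in\mathbb{R}^{r\times r}$. Because the last $m-r$ columns of $\sin(\widetilde\Lambda t)$ vanish and $\cos(\widetilde\Lambda t)$ has the prescribed block form, a direct computation shows that $\cos(\widetilde\Lambda t)+B^\top\sin(\widetilde\Lambda t)$ is block lower-triangular with diagonal blocks $\cos(\Lambda t)+B_{11}^\top\sin(\Lambda t)$ and $I_{m-r}$. Hence \eqref{positivity} is equivalent to the $r\times r$ inequality
\[
\det\bigl(\cos(\Lambda t)+B_{11}^\top\sin(\Lambda t)\bigr)>0\qquad\text{for all }t,
\]
which depends only on $\Lambda$ and on the upper-left block $B_{11}$; the blocks $B_{12},B_{21},B_{22}$ may be chosen freely.

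The second step is the construction that uses the parity of $n$. Because $n$ is odd, $n-1$ is even, and since $m\ge 2$ we may fix an even integer $r$ with $2\le r\le\min(n-1,m)$. Pair the indices by setting $\lambda_{2j-1}=\lambda_{2j}=\mu_j$ with $0<\mu_1\le\cdots\le\mu_{r/2}$, and take $B_{11}$ to be block-diagonal with $2\times 2$ skew blocks $\begin{pmatrix}0&-\beta_j\\ \beta_j&0\end{pmatrix}$, $\beta_j>0$ (the other blocks of $B$ are taken to be zero for simplicity). Then $\cos(\Lambda t)+B_{11}^\top\sin(\Lambda t)$ is itself block-diagonal, each $2\times 2$ block being $\begin{pmatrix}\cos(\mu_j t)&\beta_j\sin(\mu_j t)\\ -\beta_j\sin(\mu_j t)&\cos(\mu_j t)\end{pmatrix}$ with determinant $\cos^2(\mu_j t)+\beta_j^2\sin^2(\mu_j t)>0$. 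The full determinant is the product of these strictly positive quantities, verifying \eqref{positivity}.

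The main obstacle is conceptual rather than computational: one must realise that the individual scalar factors $\cos(\lambda_j t)$ responsible for the blow-up of the generalised helicoid at $t=\pi/(2\lambda_r)$ can be replaced by non-vanishing sums of squares $\cos^2(\mu_j t)+\beta_j^2\sin^2(\mu_j t)$ only after coupling the eigenvalues in pairs. This pairing requires $r$, and hence $n-1$, to be even, which is precisely the parity hypothesis on $n$. With \eqref{positivity} verified, Theorem \ref{thm:entire_ODE} supplies an explicit global slope matrix $Z(t)$, and Theorem \ref{mss_ode} converts it into the desired entire solution $f^\alpha=\sum_{i=1}^{n-1}z_i^\alpha(t)\,x^i$ of the minimal surface system.
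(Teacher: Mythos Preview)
Your proposal is correct and follows essentially the same route as the paper: reduce via Theorems~\ref{mss_ode} and~\ref{thm:entire_ODE} to exhibiting $(\widetilde\Lambda,B)$ satisfying \eqref{positivity}, then pair the diagonal entries of $\Lambda$ into equal couples and choose corresponding blocks of $B$ with no real eigenvalues. Your explicit $2\times2$ skew blocks are the simplest instance of the paper's more general construction (which allows block sizes $d_i$ any even integer and $B_i$ any real matrix without real eigenvalues), and your preliminary block-triangular reduction showing that only $B_{11}$ enters the determinant is a clean observation that the paper leaves implicit.
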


The simplest example produced by Theorem \ref{thm:entire_odd} is the following. 

\begin{example} ($n=3, m=2$) Denote \begin{equation}{I}=\begin{bmatrix}
1 & 0\\
0 & 1
\end{bmatrix}, \qquad
{J}=\begin{bmatrix}
0 & 1\\
-1 & 0
\end{bmatrix}.
\end{equation} We take $\widetilde{\Lambda}=\Lambda=\frac{1}{2}I$ (thus $r=2$), $B=J$ and the initial conditions become ${Z}(0)=-J$ and $\dot{{Z}}(0)=I$.
The solution is found to be 
\[{Z}(t)=\begin{bmatrix}
\sin t & -\cos t\\
\cos t & \sin t 
\end{bmatrix}.\]
The corresponding entire minimal graph is given by the embedding:
\[(x^1, x^2, t)\rightarrow (x^1, x^2, t, x^1\sin t-x^2\cos t, x^1\cos t+x^2 \sin t).\]

We denote the coordinates on the ambient $\mathbb{R}^5$ by $(x^1, x^2, t, y^1, y^2)$. The intersection of the minimal graph with an $\mathbb{R}^4$ of constant $t$ is a plane that lies in the minimal cone
\[ (x^1)^2+(x^2)^2=(y^1)^2+(y^2)^2 \]
over the Clifford torus.

\end{example}
The following family of examples yields entire minimal graphs of any odd dimension. 
\begin{example}

Let $p$ be a positive integer, $n=2p+1$, and $m=2p$. Let $\Lambda=I$ be the $2p\times 2p $ identity matrix, and $B$ be any $2p\times 2p$ real matrix that has no real eigenvalues. Then 
\[ Z(t)= 
 N\bigl(-\cos t \cdot B + \sin t\cdot I\bigr)\,
 \bigl(\cos t \cdot I + \sin t\cdot  B^\top \bigr)^{-1} M^{-1} ,
\]
where 
\[
M = (I + B^\top B)^{-1/2} \quad\text{ and }\quad
N = (I + B B^\top )^{-1/2} ,
\] is an entire solution of \eqref{eqn:slope_evolution}. The corresponding entire minimal graph is of dimension $n=2p+1$ and codimension $m=2p$.
\end{example}

By the $O(m)$ symmetry of the minimal surface system, additional solutions can be generated by acting on a given solution $Z(t)$ with any $R\in O(m)$ to produce the new solution $R Z(t)$. Moreover, the pair $(\widetilde{\Lambda}, B)$ admits an additional symmetry given by a signature matrix (a matrix with diagonal entries $\pm 1$) which is subsumed by the $O(m)$ symmetry. This latter symmetry permits us, without loss of generality, to restrict our attention to $\widetilde{\Lambda}$ with positive diagonal entries.

Entire minimal graphs of even dimensions can be obtained from entire holomorphic or anti-holomorphic functions defined on even-dimensional Euclidean spaces $\mathbb{R}^n$. When $n=2$, Osserman (Section 5 in \cite{Os86} ) constructed more entire solutions that are neither holomorphic nor anti-holomorphic by his isothermal parametrization theorem.  
There are also constructions of minimal graphs by resolving the Lawson-Osserman cones and their generalizations (see \cite{DY} and \cite{XYZ}). 

A theorem of Micallef (see \cite{84Mi}) asserts that any stable, entire, two-dimensional minimal graph in $\mathbb{R}^4$ must be holomorphic with respect to some orthogonal complex structure on $\mathbb{R}^4$ (see Corollary 5.1 on page 68 of \cite{84Mi}). Consequently, the non-holomorphic/anti-holomorphic examples constructed by Osserman in \cite{Os86} must be unstable. It is therefore natural to investigate the stability of  our new examples, which, in contrast, are entire minimal graphs defined on odd-dimensional Euclidean spaces. In this connection, we recall that a classical theorem of do Carmo-Peng \cite{dP} and independently Fischer-Colbrie-Schoen \cite{80FCS} implies the classical helicoid in $\mathbb{R}^3$ is unstable.

Each minimal submanifold constructed under this ansatz is a generalized helicoid and therefore austere; consequently, its conormal bundle is a special Lagrangian submanifold by Harvey-Lawson \cite{HL82}. Robert Bryant pointed out to us that entire austere graphs correspond to entire special Lagrangian graphs. Thus, we obtain a large family of new entire special Lagrangian graphs. These appear to differ from those constructed in \cite{TTW}, although both arise from the ``variation of quadratic polynomials" ansatz. The Lagrangian phase is zero for the examples arising from entire minimal graphs. \footnote{In forthcoming work, we will investigate the most general class of entire solutions produced by the variation of quadratic polynomials ansatz.} These results are summarized in the following theorem. 
\begin{theorem}\label{thm:entire_SPL}
Suppose $n\geq 3$ is an odd integer and $m\geq 2$. Let $(x^1, \cdots, x^{n-1}, t, u_1,\cdots, u_m)$ denote coordinates on $\mathbb{R}^{n+m}$ and consider the function 
\[F=\sum_{i=1}^{n-1}\sum_{\alpha=1}^m z_i^\alpha(t) x^i u_\alpha,\] where $Z(t)=[z_i^\alpha (t)]$ is a solution of equation \eqref{eqn:slope_evolution} that exists for $t\in (-\infty, \infty)$ as in Theorem \ref{thm:entire_ODE}. Then the Lagrangian graph defined by $\nabla F$ in $\mathbb{C}^{n+m}$ is an entire special Lagrangian graph with phase zero.  
\end{theorem}

\subsection{Scope of the Paper}
This paper is organized as follows.
\begin{itemize}
 \item In Section \ref{sec:Ansatz-MSS}, we introduce the evolving-plane ansatz and show that the minimal surface system is reduced to the Grassmannian geodesic equation under this ansatz and prove Theorem \ref{mss_ode}.
 \item In Section \ref{sec:geodesic}, we discuss the Grassmannian geodesic in different models of Grassmannian and show how the geodesic equation transforms from one model to another. 
 
 \item In Section \ref{sec:entire_ODE}, we prove Theorem \ref{thm:entire_ODE}.
 \item In Section \ref{sec:entire_odd}, we prove Theorem \ref{thm:entire_odd} by identifying $B$ and $\widetilde{\Lambda}$ that satisfy the assumptions of Theorem \ref{thm:entire_ODE}. 

 \item In Section \ref{sec:entire_SPL}, we prove Thereom \ref{thm:entire_SPL}.
\item In Appendix \ref{app:Lorentzian}, we generalize the construction to semi-Euclidean spaces of signature $(n-1, m+1)$ and provide examples of entire graphs of vanishing mean curvature over the Minkowski space $\mathbb{R}^{n-1,1}$ in $\mathbb{R}^{n-1, m+1}$.
\end{itemize}
 
\section{The Evolving-Plane Ansatz and Minimal Surface System}\label{sec:Ansatz-MSS}

This section is devoted to the proof of Theorem \ref{mss_ode}.
Let $n\geq 2, m \geq 1$ be integers. Given $(n-1)\times m$ functions $\{z_i^\alpha(t)\}_{\alpha=1,\ldots,m; \ i=1,\ldots,n-1}$ depending on the variable $t$, consider $f^\alpha(\vec{x},t)= \sum\limits_{i=1}^{n-1} x^i z^\alpha_i(t)$. The graph of $f^\alpha$ defines an embedding 
\begin{align*}
    \begin{array}{cccl}
         \Phi: &\mathbb R^{n} &\rightarrow &\mathbb R^{n+m} \\
         &(x^1, \ldots, x^{n-1}, t) &\mapsto &\bigl(x^1, \ldots, x^{n-1}, t, \sum_{i=1}^{n-1} x^i z^1_i(t), \ldots, \sum_{i=1}^{n-1} x^i z^m_i(t)\bigr) .
    \end{array}
\end{align*}

The first step is to compute the induced metric. A basis of the tangent space of the embedding 
is given by
\begin{align*}
    \Phi_{x^j}&=\bigl(\vec{e}_j,0,z_j^1(t),\ldots,z_j^m(t)\bigr),\\
    \Phi_t&=\Bigl(\vec{0},1,\sum\limits_{i=1}^{n-1} x^i\dot{z}_i^1(t),\ldots,\sum\limits_{i=1}^{n-1} x^i\dot{z}_i^m(t)\Bigr).
\end{align*}
Therefore, the induced metric in the coordinates $(x^1,\ldots, x^{n-1}, t)$ is given by 
\begin{equation}\label{eqn:g}
    g= \begin{bmatrix}
        I_{(n-1)\times (n-1)}+\sum\limits_{\beta=1}^m\vec{z}^{\ \beta}(\vec{z}^{\ \beta})^\top & \vrule & \sum\limits_{\beta=1}^m \bigl(\langle \vec{x}, \dot{\vec{z}}^{\ \beta}\rangle \bigr) \vec{z}^{\ \beta}\\[1.5em]
        \hline
        & \vrule & \\
        \sum\limits_{\beta=1}^m \bigl(\langle \vec{x}, \dot{\vec{z}}^{\ \beta}\rangle \bigr) (\vec{z}^{\ \beta})^\top & \vrule & 1+\sum\limits_{\beta=1}^m \bigl(\langle \vec{x}, \dot{\vec{z}}^{\ \beta}\rangle \bigr)^2
    \end{bmatrix}
\end{equation}
where we abbreviate $\vec{x} = (x^1,\ldots,x^{n-1})^\top$, and $\vec{z}^{\ \alpha}=(z_1^\alpha,\ldots,z_{n-1}^\alpha)^\top$, $\alpha=1,\ldots,m$.

The (Euclidean) Hessian of the coordinate function $f^\alpha$, for a fixed $\alpha=1,\ldots,m$ and with respect to the coordinates $(x^1, \ldots, x^{n-1}, t)$, is given by
\begin{equation}\label{eqn:hessian}
    \Hess f^\alpha=\begin{bmatrix}
        0_{(n-1)\times (n-1)} & \vrule & \dot{\vec{z}}^{\ \alpha} \\[1em]
        \hline
        & \vrule & \\
        (\dot{\vec{z}}^{\ \alpha})^\top & \vrule & \langle \vec{x}, \ddot{\vec{z}}^{\ \alpha}\rangle 
    \end{bmatrix}.
\end{equation}
By Proposition \ref{prop:minimal-basic}, the Laplacian of $f^\alpha$ with respect to the induced metric $g$ is given by contracting the inverse of the matrix \eqref{eqn:g} and the matrix \eqref{eqn:hessian}. 
Due to the vanishing of the top-left block of $\Hess f^\alpha$, the Cramer's rule calculation is greatly simplified, and we obtain:   
\begin{equation}\label{eqn:Lap-fa-C}
    \Delta_g f^\alpha=2\sum_{i=1}^{n-1} C_{i,n} \dot{z}^\alpha_i+C_{n,n}\sum\limits_{i=1}^{n-1} x^i \ddot{z}^\alpha_i(t) ,
\end{equation}
where $C_{i,n}$ is the $(i,n)$-adjoint of $g$ and $C_{n,n}$ is the $(n,n)$-adjoint of $g$. Therefore, we deduce from \eqref{eqn:Lap-fa-C} that
\begin{align} 
    \Delta_g f^\alpha &= \det \begin{bmatrix}
        I_{(n-1)\times (n-1)}+\sum\limits_{\beta=1}^m\vec{z}^{\ \beta}(\vec{z}^{\ \beta})^\top & \vrule & 2\dot{\vec{z}}^{\ \alpha}\\[1.5em]
        \hline
        & \vrule & \\
        \sum\limits_{\beta=1}^m \bigl(\langle \vec{x}, \dot{\vec{z}}^{\ \beta}\rangle \bigr) (\vec{z}^{\ \beta})^\top & \vrule & \langle \vec{x}, \ddot{\vec{z}}^{\ \alpha}\rangle 
    \end{bmatrix} \notag \\
    &= \sum_k x^k\cdot\det \begin{bmatrix}
        I_{(n-1)\times (n-1)}+\sum\limits_{\beta=1}^m\vec{z}^{\ \beta}(\vec{z}^{\ \beta})^\top & \vrule & 2\dot{\vec{z}}^{\ \alpha}\\[1.5em]
        \hline
        & \vrule & \\
        \sum\limits_{\beta=1}^m \dot{z}_k^{\ \beta} (\vec{z}^{\ \beta})^\top & \vrule & \ddot{z}_k^{\ \alpha} 
    \end{bmatrix} . \label{eqn:Lap-fa}
\end{align}

If $\Phi(\vec{x},t)$ defines a minimal embedding, Proposition \ref{prop:minimal-basic} asserts that \(\Delta_g f^\alpha = 0\) for $\alpha = 1,\ldots,m$.  It follows that all the linear coefficients on the right hand side of \eqref{eqn:Lap-fa} must vanish, which implies the following equation:
\begin{align}\label{eqn:mss-gen-z}
    \det \begin{bmatrix}
        I_{(n-1)\times (n-1)}+\sum\limits_{\beta=1}^m\vec{z}^{\ \beta}(\vec{z}^{\ \beta})^\top & \vrule & 2\dot{\vec{z}}^{\ \alpha}\\[1.5em]
        \hline
        & \vrule & \\
        \sum\limits_{\beta=1}^m \dot{z}_k^{\ \beta} (\vec{z}^{\ \beta})^\top & \vrule & \ddot{z}_k^{\ \alpha} 
        \end{bmatrix}=0 ,
\end{align}
for \(k=1,\ldots,n-1\) and \(\alpha=1,\ldots,m\).

We recall an elementary determinant formula for a block matrix. 
\begin{lemma}\label{determinant}
    For a block matrix $\begin{bmatrix} A  & B  \\ C  & D  \end{bmatrix}$ with $A $ invertible and $D $ a scalar:
    \[ \det\begin{bmatrix} A  & B  \\ C  & D  \end{bmatrix} = \det(A ) \cdot (D  - C  A ^{-1} B ). \]
\end{lemma}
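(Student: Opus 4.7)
The plan is to prove the identity as an application of the Schur complement / block LU decomposition. I would begin by writing down the factorization
\[
\begin{bmatrix} A & B \\ C & D \end{bmatrix}
\;=\;
\begin{bmatrix} A & 0 \\ C & 1 \end{bmatrix}
\begin{bmatrix} I_{n-1} & A^{-1} B \\ 0 & D - C A^{-1} B \end{bmatrix},
\]
which is verified by direct block multiplication and uses only the invertibility of $A$. Taking determinants of both sides and invoking the standard fact that a block-triangular matrix has determinant equal to the product of the determinants of its diagonal blocks, the first factor contributes $\det(A)\cdot 1 = \det(A)$ and the second contributes $1\cdot (D - C A^{-1} B)$. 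Multiplying gives the claimed formula.

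An essentially equivalent route is to perform elementary row operations directly: subtracting $C A^{-1}$ times the first block of rows from the last row zeroes out the $C$ block and replaces the bottom-right entry by $D - C A^{-1} B$, while preserving the determinant. Expanding along the last row then yields $(D - C A^{-1} B)\,\det(A)$.

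There is no real obstacle here. The only point worth remarking on is that, because $D$ is a scalar, the product $C A^{-1} B$ is a $1\times 1$ matrix, hence a scalar, so the right-hand side is an ordinary scalar multiple of $\det(A)$ rather than a matrix expression needing further reduction. The invertibility of $A$ is used only to form $A^{-1}$; the identity would extend by continuity to singular $A$, but that extension is not needed for the subsequent applications to \eqref{eqn:mss-gen-z}.
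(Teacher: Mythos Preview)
Your argument is correct and is the standard Schur-complement/block-LU derivation of this identity. The paper itself does not supply a proof of Lemma~\ref{determinant}; it simply recalls the formula as an elementary fact and immediately applies it to \eqref{eqn:mss-gen-z}, so there is nothing to compare against, and your write-up would serve perfectly well as the omitted justification.
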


Therefore, the above determinant is zero if and only if $D  - C  A ^{-1} B =0$.  By taking $A =I+ZZ^\top $ and $D =\ddot{z}_k^\alpha$, we obtain \eqref{eqn:slope_evolution}.

We remark that Machado and Ferreira relate ruled minimal submanifolds and geodesic equations on affine Grassmannians in \cite{MF1996}.

\section{Grassmannian geodesic}\label{sec:geodesic}

In this section, we prove some lemmas concerning the Grassmannian geodesic equation \eqref{eqn:slope_evolution}. We start by reviewing Grassmannian geometry from two models and the geodesic equation in each model. 

\subsection{Grassmannian geodesic in the Stiefel model}
The Grassmannian $\Gr(n-1,n-1+m)$ consists of all $(n-1)$-dimensional linear subspaces in $\mathbb{R}^{n-1+m}$ and is isomorphic to $\Gr(m,n-1+m)$. It can be realized as the quotient of the Stiefel manifold $\St_m(\mathbb{R}^{n-1+m})$---the space of orthonormal $m$-frames in $\mathbb{R}^{n-1+m}$—by the right action of the orthogonal group $O(m)$. Namely,
\[
    \Gr(n-1,n-1+m) = \St_m(\mathbb{R}^{n-1+m}) / O(m),
\]
where
\[
    \St_m(\mathbb{R}^{n-1+m}) := \{ V \in \mathbb{R}^{(n-1+m) \times m} \mid V^\top V = I_m \}.
\]
The canonical Riemannian metric is induced from the Euclidean inner product $\langle \delta V_1, \delta V_2 \rangle := \tr(\delta V_1^\top \delta V_2)$ restricted to the horizontal bundle defined by $V^\top \delta V = 0$. Geodesics in $\Gr(n,n+m)$ then correspond to horizontal curves in the Stiefel manifold that are critical points of the energy functional
\[
\mathcal{E}[V] := \frac{1}{2} \int \tr(\dot{V}^\top \dot{V})\, \mathrm{d}t.
\]
See \cite{EAS1998} for more details.

\begin{definition}\label{3.1}
    A smooth family of $(n-1+m)\times m$ matrix $ V(t) = \begin{bmatrix} P(t) \\ Q(t) \end{bmatrix} $, where $P(t)\in \mathbb{R}^{m\times m}$ and $Q(t)\in \mathbb{R}^{(n-1)\times m}$, is a Grassmannian geodesic in Stiefel model if
    \begin{equation}\label{eq:geod-St}
        \begin{split}
V^\top V = I_m, \quad V^\top \dot{V} = 0, \quad \ddot{V} + V(\dot{V}^\top \dot{V}) = 0.            
        \end{split}
    \end{equation}
\end{definition}

\begin{proposition} \label{3.2} Suppose $V(t)$ is a Grassmannian geodesic in Stiefel model and $L$ is an element of $O(n-1+m)$, then $LV(t)$ is also a Grassmannian geodesic in Stiefel model. 
 
\end{proposition}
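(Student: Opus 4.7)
The plan is to verify directly that each of the three defining conditions of a Stiefel-model Grassmannian geodesic in Definition \ref{3.1} is preserved by left multiplication by a constant orthogonal matrix $L \in O(n-1+m)$. Writing $W(t) = L V(t)$, I would use only two ingredients throughout: the orthogonality identity $L^\top L = I_{n-1+m}$, and the fact that $L$ is independent of $t$, so that $\dot W = L \dot V$ and $\ddot W = L \ddot V$.

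Step one is to check the orthonormality of the frame:
\[
W^\top W \;=\; V^\top L^\top L\, V \;=\; V^\top V \;=\; I_m.
\]
Step two is to check the horizontality condition:
\[
W^\top \dot W \;=\; V^\top L^\top L\, \dot V \;=\; V^\top \dot V \;=\; 0.
\]
Step three is to check the geodesic equation itself. Here the key observation is that the Gram matrix of velocities is also preserved, $\dot W^\top \dot W = \dot V^\top L^\top L\, \dot V = \dot V^\top \dot V$, so
\[
\ddot W + W(\dot W^\top \dot W) \;=\; L\ddot V + L V (\dot V^\top \dot V) \;=\; L\bigl(\ddot V + V(\dot V^\top \dot V)\bigr) \;=\; 0,
\]
where the last equality uses that $V(t)$ already satisfies \eqref{eq:geod-St}.

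Strictly speaking there is no substantial obstacle; the statement is the infinitesimal/explicit reflection of the familiar fact that left multiplication by $O(n-1+m)$ acts isometrically on the Stiefel manifold (preserving the Euclidean inner product $\langle \delta V_1, \delta V_2 \rangle = \tr(\delta V_1^\top \delta V_2)$) and descends to an isometric action on the Grassmannian, hence sends geodesics to geodesics. The only thing that has to be checked carefully is that $L$ is \emph{constant} in $t$, which ensures that the time derivatives commute with left multiplication; this is what allows $L$ to factor cleanly out of the geodesic operator in step three.
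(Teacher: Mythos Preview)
Your proof is correct and follows exactly the same approach as the paper: the paper simply states that it is ``not hard to see'' that $\widetilde{V}=LV$ satisfies the three conditions in \eqref{eq:geod-St}, and you have spelled out precisely those verifications using $L^\top L = I$ and the constancy of $L$ in $t$.
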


\begin{proof}
It is not hard to see that $\widetilde{V}(t)=LV(t)$ satisfies the conditions:
\[
    \widetilde{V}^\top \widetilde{V} = I_m, \quad \widetilde{V}^\top \dot{\widetilde{V}} = 0, \quad \ddot{\widetilde{V}} + \widetilde{V}(\dot{\widetilde{V}}^\top \dot{\widetilde{V}}) = 0,
\]
and the proposition follows.
\end{proof}

\subsection{Affine coordinates of the Grassmannian}

The Grassmannian $\Gr(n-1,n-1+m)$ consists of all $(n-1)$-dimensional linear subspaces in $\mathbb{R}^{n-1+m}$ and is canonically isomorphic to $\Gr(m,n-1+m)$ via orthogonal complement. A natural local parameterization is given by the \emph{affine coordinates}: identify a subspace that projects isomorphically onto $\mathbb{R}^m$ with the graph of a linear map $Z : \mathbb{R}^m \to \mathbb{R}^{n-1}$, represented as
\[V(Z) := \begin{bmatrix} I_m \\ Z \end{bmatrix}, \quad Z \in \mathbb{R}^{(n-1) \times m}.\]
This describes a smooth embedding of an open subset of $\mathbb{R}^{(n-1) \times m}$ into $\Gr(m,n-1+m)$, and hence into $\Gr(n-1,n-1+m)$. 

The Riemannian structure induced from the canonical metric on the Stiefel manifold leads to an explicit formula for the energy of curves $Z(t)$ in the affine coordinates. The geodesic equation in this chart, derived variationally by Wong \cite{Wong}, captures the intrinsic geometry of $\mathrm{Gr}(n-1,n-1+m)$ in terms of the coordinate matrix $Z(t)$.

\begin{definition}
A smooth curve of matrices $Z(t) \in \mathbb{R}^{(n-1) \times m}$ is a \emph{Grassmannian geodesic in affine coordinates} if it satisfies the second-order differential equation:
\[
\ddot{Z} = 2\dot{Z}Z^\top (I_{n-1} + ZZ^\top )^{-1} \dot{Z}.
\]
\end{definition}

To relate the affine coordinates to the Stiefel model, we observe that any smooth orthonormal frame $V(t) = \begin{bmatrix} P(t) \\ Q(t) \end{bmatrix} \in \mathbb{R}^{(n-1+m) \times m}$ with $V^\top V = I_{m}$ and $V^\top \dot{V} = 0$ projects to a Grassmannian curve.

\begin{lemma}\label{lem:st-to-graph}
Let $V(t) = \begin{bmatrix} P(t) \\ Q(t) \end{bmatrix}$ be a smooth curve in the Stiefel manifold $\St_m(\mathbb{R}^{n-1+m})$ such that $V^\top V = I_m$ and $V^\top \dot{V} = 0$. If $P(t)$ is invertible, then the corresponding curve in the associated graph coordinate, \(Z(t) := Q(t) P(t)^{-1}\),
satisfies
\[
\ddot{Z} - 2\dot{Z}Z^\top (I_{n-1} + ZZ^\top )^{-1} \dot{Z} = \left( \ddot{Q} - QP^{-1} \ddot{P} \right) P^{-1}.
\]
\end{lemma}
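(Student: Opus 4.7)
The plan is to differentiate $Z = QP^{-1}$ twice and then use the Stiefel horizontality condition $V^\top \dot V = 0$ to convert the residual terms into the quadratic expression $2\,\dot Z Z^\top(I_{n-1}+ZZ^\top)^{-1}\dot Z$ on the left-hand side of the claimed identity.

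First, by the product rule and the standard identity $\tfrac{d}{dt}(P^{-1}) = -P^{-1}\dot P P^{-1}$, one computes $\dot Z = (\dot Q - Z\dot P)P^{-1}$. Differentiating once more and using $QP^{-1} = Z$ to cancel one of the $\ddot P$ contributions gives $\ddot Z = \ddot Q\, P^{-1} - 2\,\dot Z\,\dot P P^{-1} - Z\,\ddot P P^{-1}$. Rearranging, one obtains the key reduction $\ddot Z - (\ddot Q - QP^{-1}\ddot P)P^{-1} = -2\,\dot Z\,\dot P P^{-1}$. Thus the lemma is reduced to the matrix identity $-\dot P P^{-1} = Z^\top(I_{n-1} + ZZ^\top)^{-1}\dot Z$.

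Second, the horizontality condition reads $P^\top \dot P + Q^\top \dot Q = 0$, which (using invertibility of $P$) gives $\dot P = -(P^\top)^{-1} Q^\top \dot Q = -Z^\top \dot Q$. Substituting $\dot Q P^{-1} = \dot Z + Z\dot P P^{-1}$ from the previous step yields $(I_m + Z^\top Z)\,\dot P P^{-1} = -Z^\top \dot Z$, and hence $\dot P P^{-1} = -(I_m + Z^\top Z)^{-1}Z^\top \dot Z$. The proof is then completed by the push-through identity $(I_m + Z^\top Z)^{-1}Z^\top = Z^\top(I_{n-1} + ZZ^\top)^{-1}$, which is an immediate consequence of $(I_m + Z^\top Z)Z^\top = Z^\top(I_{n-1} + ZZ^\top)$.

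The argument is entirely algebraic and I do not expect any real conceptual obstacle; the only delicate point is bookkeeping, in particular the noncommutativity of matrix products and the order in which $P^{-1}$ is inserted. What the calculation makes transparent is that the horizontality condition $V^\top \dot V = 0$ is precisely what produces the nonlinear correction in affine coordinates, so that it serves as the Stiefel-manifold source of the Christoffel contribution on $\Gr(n-1,n-1+m)$ in the affine chart; notice that the constraint $V^\top V = I_m$ itself never needs to be invoked, only its derivative.
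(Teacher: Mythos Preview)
Your proof is correct and follows essentially the same algebraic route as the paper: differentiate $Z=QP^{-1}$ twice, isolate the residual $-2\dot Z\,\dot P P^{-1}$, and use horizontality together with the push-through identity $(I_m+Z^\top Z)^{-1}Z^\top = Z^\top(I_{n-1}+ZZ^\top)^{-1}$ to rewrite it as the nonlinear term. Your organization is slightly cleaner than the paper's in that you reduce everything to the single identity $-\dot P P^{-1}=Z^\top(I_{n-1}+ZZ^\top)^{-1}\dot Z$; the paper instead computes $\dot Z Z^\top(I_{n-1}+ZZ^\top)^{-1}\dot Z$ directly, invoking $P^\top P+Q^\top Q=I_m$ along the way to obtain $(I_m+Z^\top Z)^{-1}=PP^\top$ and $Q^\top Q=I_m-P^\top P$. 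So your observation that the orthonormality constraint $V^\top V=I_m$ is unnecessary is genuine and a small improvement. One terminological slip in your closing remark: the horizontality condition $V^\top\dot V=0$ is \emph{not} the derivative of $V^\top V=I_m$ (that derivative only says $V^\top\dot V$ is skew-symmetric); horizontality is an independent, stronger hypothesis, and it is the only one your argument uses.
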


\begin{proof}
The Grassmannian conditions imply:
\begin{equation}\label{eq:Gr-condition}
    \begin{split}
 V^\top V &= P^\top P + Q^\top Q = I_m, \\
 V^\top \dot{V} &= P^\top \dot{P} + Q^\top \dot{Q} = 0.        
    \end{split}
\end{equation}
We compute via \eqref{eq:Gr-condition}:
 \begin{align*}
 I_m+Z^\top Z&=I_m+P^{-\top}Q^\top QP^{-1}=P^{-\top}(P^\top P+Q^\top Q)P^{-1}=P^{-\top}P^{-1},\\
 (I_m+Z^\top Z)^{-1}Z^\top &=P P^\top P^{-\top}Q^\top =PQ^\top ,
 \end{align*}
Differentiating $ Z = Q P^{-1} $, we obtain:
\[
    \dot{Z} = (\dot{Q} - Q P^{-1} \dot{P}) P^{-1}.
\]
Using these and the identity $Z^\top (I_{n-1} + ZZ^\top )^{-1}=(I_m + Z^\top Z)^{-1}Z^\top $, we compute:
\begin{align*}
    &\quad \dot{Z} Z^\top (I_{n-1} + ZZ^\top )^{-1} \dot{Z} \\
    &= \dot{Z} (I_m + Z^\top Z)^{-1}Z^\top \dot{Z}\\
    &= (\dot{Q} - Q P^{-1} \dot{P}) P^{-1} PQ^\top (\dot{Q} - Q P^{-1} \dot{P}) P^{-1}\\
    &= (\dot{Q} - Q P^{-1} \dot{P}) Q^\top (\dot{Q} - Q P^{-1} \dot{P}) P^{-1}.
\end{align*}
By the Grassmannian conditions \eqref{eq:Gr-condition} again, we have
\begin{align*}
    Q^\top (\dot{Q} - Q P^{-1} \dot{P})&=Q^\top \dot{Q}-(I_m-P^\top P)P^{-1}\dot{P}\\
    &= Q^\top \dot{Q}+P^\top \dot{P}-P^{-1}\dot{P}\\
    &=-P^{-1}\dot{P},
\end{align*}
which implies
\begin{align*}
    \dot{Z} Z^\top (I_{n-1} + ZZ^\top )^{-1} \dot{Z} =-(\dot{Q} - Q P^{-1} \dot{P})P^{-1}\dot{P}P^{-1}.
\end{align*}
On the other hand, differentiating $\dot{Z}$ again yields:
\begin{align*}
\ddot{Z} &= \ddot{Q} P^{-1} - 2\dot{Q} P^{-1} \dot{P} P^{-1} 
+ 2Q P^{-1} \dot{P} P^{-1} \dot{P} P^{-1} - Q P^{-1} \ddot{P} P^{-1} \\
&= (\ddot{Q} - Q P^{-1} \ddot{P}) P^{-1} - 2(\dot{Q} - Q P^{-1} \dot{P}) P^{-1} \dot{P} P^{-1}.
\end{align*}
This gives the claimed identity.
\end{proof}

\begin{proposition} \label{3.5}
Let $ V(t) = \begin{bmatrix} P(t) \\ Q(t) \end{bmatrix} \in \St_m(\mathbb{R}^{n-1+m})$ be a Grassmannian geodesic in Stiefel model as in Definition \ref{3.1}. 
Then $ Z(t) = Q(t) P(t)^{-1}\in \mathbb{R}^{(n-1)\times m} $ satisfies the Grassmannian geodesic equation in affine coordinates:
\[
\ddot{Z} = 2\dot{Z}Z^\top (I_{n-1} + ZZ^\top )^{-1} \dot{Z}.
\]
\end{proposition}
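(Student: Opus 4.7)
The plan is to piggyback directly on Lemma~\ref{lem:st-to-graph}, which has already done the bulk of the computational work by expressing the affine-coordinate geodesic operator
\[
\ddot{Z} - 2\dot{Z}Z^\top (I_{n-1} + ZZ^\top )^{-1} \dot{Z}
\]
in terms of $P,Q$ and their derivatives as $\bigl(\ddot{Q} - QP^{-1}\ddot{P}\bigr)P^{-1}$. So the entire task reduces to showing that the right-hand side of that identity vanishes whenever $V = \begin{bmatrix} P \\ Q \end{bmatrix}$ is a Grassmannian geodesic in the Stiefel model.

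To do this, I would read off the block form of the Stiefel geodesic equation $\ddot{V} + V(\dot{V}^\top \dot{V}) = 0$ from Definition~\ref{3.1}. Letting $\Omega(t) := \dot{V}^\top \dot{V}$ (an $m\times m$ symmetric matrix), the equation splits as
\[
\ddot{P} = -P\,\Omega, \qquad \ddot{Q} = -Q\,\Omega.
\]
Substituting these into $\ddot{Q} - QP^{-1}\ddot{P}$ gives
\[
\ddot{Q} - QP^{-1}\ddot{P} \;=\; -Q\,\Omega \;-\; QP^{-1}(-P\,\Omega) \;=\; -Q\,\Omega + Q\,\Omega \;=\; 0,
\]
so $(\ddot{Q} - QP^{-1}\ddot{P})P^{-1} = 0$ and Lemma~\ref{lem:st-to-graph} immediately yields the claim.

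There is essentially no obstacle here once Lemma~\ref{lem:st-to-graph} is in hand; the only thing to be mildly careful about is the hypothesis that $P(t)$ be invertible, so that $Z(t) = Q(t)P(t)^{-1}$ is well-defined and the Lemma applies. This is a local/open condition, and since the conclusion of Proposition~\ref{3.5} is itself a pointwise ODE, it suffices to work on the open set where $P$ is invertible, which is exactly the domain on which the affine chart makes sense. So the proof will consist of one sentence citing Lemma~\ref{lem:st-to-graph} and the short block-matrix computation above.
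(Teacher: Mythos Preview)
Your proposal is correct and follows essentially the same route as the paper: you split the Stiefel geodesic equation into its $P$- and $Q$-blocks, observe that $\ddot{Q} - QP^{-1}\ddot{P}$ cancels, and invoke Lemma~\ref{lem:st-to-graph}. The only cosmetic difference is that you package $\dot{V}^\top\dot{V}$ as $\Omega$, whereas the paper writes it out as $\dot{P}^\top\dot{P} + \dot{Q}^\top\dot{Q}$; your additional comment on the invertibility of $P(t)$ is a welcome clarification.
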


\begin{proof}
From the geodesic equation \eqref{eq:geod-St}, we have:
\begin{align*}
\ddot{P} + P(\dot{P}^\top \dot{P} + \dot{Q}^\top \dot{Q}) &= 0, \\
\ddot{Q} + Q(\dot{P}^\top \dot{P} + \dot{Q}^\top \dot{Q}) &= 0.
\end{align*}
Hence,
\[
\ddot{Q} - Q P^{-1} \ddot{P} 
= -Q(\dot{P}^\top \dot{P} + \dot{Q}^\top \dot{Q}) + Q P^{-1} P(\dot{P}^\top \dot{P} + \dot{Q}^\top \dot{Q}) = 0.
\]
Applying Lemma~\ref{lem:st-to-graph}, the result follows.
\end{proof}

\subsection{Transformation of geodesic equation under the \texorpdfstring{$O(n-1+m)$}{} action}

We now show how the geodesic equation transforms under the action of $O(n-1+m)$.

\begin{proposition}
Let $ Z(t) \in \mathbb{R}^{(n-1) \times m} $ be a curve in $ \Gr(n-1, n-1 + m) $, and let
\[
    \begin{bmatrix} A & B \\ C & D \end{bmatrix} \in \mathrm{O}(n-1 + m),
\]
where $A\in \mathbb{R}^{m\times m},\ B\in \mathbb{R}^{m\times (n-1)},\ C\in \mathbb{R}^{(n-1)\times m},\ D\in \mathbb{R}^{(n-1)\times (n-1)}$. Define
\begin{equation}\label{eqn:Mob}
 W(t) = (C + D Z(t))(A + B Z(t))^{-1}\in \mathbb{R}^{(n-1) \times m}.
\end{equation}
Then, as long as $(A+BZ)^{-1}$ exists, we have
\begin{align*}
    &\quad \ddot{W} - 2\dot{W} W^\top (I_{n-1} + W W^\top )^{-1} \dot{W} \\
    &= (D - W B) \left[ \ddot{Z} - 2\dot{Z} Z^\top (I_{n-1} + ZZ^\top )^{-1} \dot{Z} \right](A + B Z)^{-1}.
\end{align*}
In particular, Grassmannian geodesics in affine (graph) chart are preserved by transformation \eqref{eqn:Mob}. 
\end{proposition}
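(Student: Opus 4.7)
My plan is to reduce the identity to Lemma~\ref{lem:st-to-graph} by lifting the affine curve $Z(t)$ to a horizontal orthonormal frame in the Stiefel model, applying the constant orthogonal action $L$ up there, and projecting back down. Concretely, given a smooth $Z(t)$ on which $U(t) := A + BZ(t)$ is invertible, I would first construct a smooth horizontal orthonormal lift $\tilde{V}(t) = \begin{bmatrix} \tilde{P}(t) \\ \tilde{Q}(t) \end{bmatrix}$ satisfying $\tilde{V}^\top\tilde{V} = I_m$, $\tilde{V}^\top\dot{\tilde{V}} = 0$, $\tilde{Q} = Z\tilde{P}$, and $\tilde{P}$ invertible. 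A concrete recipe: start with the orthonormal---but not necessarily horizontal---lift $V_0 := \begin{bmatrix} I \\ Z \end{bmatrix}(I + Z^\top Z)^{-1/2}$ and right-multiply by $R(t) \in O(m)$ solving $\dot{R} = -(V_0^\top\dot{V}_0)R$ with $R(0) = I$. Skew-symmetry of $V_0^\top\dot{V}_0$ keeps $R(t)$ in $O(m)$, and $\tilde{P} = (I + Z^\top Z)^{-1/2}R$ is manifestly invertible.

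Next, form $\tilde{V}' := L\tilde{V} = \begin{bmatrix} \tilde{P}' \\ \tilde{Q}' \end{bmatrix}$. Since $L$ is a \emph{constant} orthogonal matrix, $\tilde{V}'$ is again horizontal and orthonormal. Using $\tilde{Q} = Z\tilde{P}$ one sees $\tilde{P}' = (A + BZ)\tilde{P} = U\tilde{P}$ (invertible by hypothesis) and $\tilde{Q}' = (C + DZ)\tilde{P}$, so the affine-coordinate projection of $\tilde{V}'$ is $\tilde{Q}'(\tilde{P}')^{-1} = (C + DZ)(A + BZ)^{-1} = W$. Lemma~\ref{lem:st-to-graph} therefore applies to both $\tilde{V}$ and $\tilde{V}'$ and expresses the two geodesic-defect terms in the statement in terms of $\tilde{P}, \tilde{Q}$ and $\tilde{P}', \tilde{Q}'$, respectively.

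The remaining work is purely algebraic. Differentiating $\tilde{P}' = U\tilde{P}$ and $\tilde{Q}' = (C + DZ)\tilde{P}$ twice and using the defining relation $(C + DZ) - WU = 0$ to eliminate the $\ddot{\tilde{P}}$ contribution yields
\[
\ddot{\tilde{Q}}' - \tilde{Q}'(\tilde{P}')^{-1}\ddot{\tilde{P}}' = (D - WB)\bigl(\ddot{Z}\,\tilde{P} + 2\dot{Z}\,\dot{\tilde{P}}\bigr).
\]
Right-multiplying by $(\tilde{P}')^{-1} = \tilde{P}^{-1}U^{-1}$ and invoking Lemma~\ref{lem:st-to-graph} applied to $\tilde{V}$ itself---which identifies $\ddot{Z} + 2\dot{Z}\,\dot{\tilde{P}}\tilde{P}^{-1}$ with $\ddot{Z} - 2\dot{Z}Z^\top(I + ZZ^\top)^{-1}\dot{Z}$---immediately produces the claimed identity. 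Preservation of Grassmannian geodesics then follows because the bracketed expressions on both sides vanish simultaneously.

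I do not anticipate a serious conceptual obstacle. The items requiring care are (i) smoothness and invertibility of the horizontal lift, which are built into the construction, and (ii) bookkeeping the $\ddot{\tilde{P}}$ cancellation, which hinges crucially on the defining relation $(C + DZ) = W(A + BZ)$. A direct computational alternative---differentiating $W = (C + DZ)(A + BZ)^{-1}$ and simplifying $(I + WW^\top)^{-1}$ using the orthogonality relations encoded in $L^\top L = I$---is feasible but obscures the structural reason for the covariance, which is simply that $L$ acts by isometries on the Grassmannian.
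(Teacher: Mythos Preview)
Your proposal is correct and follows essentially the same route as the paper's own proof: lift $Z$ to a horizontal Stiefel frame, act by the constant orthogonal matrix, and apply Lemma~\ref{lem:st-to-graph} to both frames. The only differences are cosmetic---you construct the horizontal lift explicitly (the paper simply posits it), and you substitute $\tilde{Q}=Z\tilde{P}$ before differentiating, whereas the paper keeps $P,Q$ separate and only combines them via $\ddot{Q}-Z\ddot{P}$ at the end; both algebraic organizations lead to the same cancellation via $(C+DZ)=W(A+BZ)$.
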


\begin{proof}
Let $ \begin{bmatrix} P \\ Q \end{bmatrix} $ be a Stiefel lift of the curve \( Z = QP^{-1} \), so that \( V = \begin{bmatrix} P \\ Q \end{bmatrix} \in \St_m(\mathbb{R}^{n-1+m}) \). Consider the transformed frame
\[
    \widetilde{V} := \begin{bmatrix} \widetilde{P} \\ \widetilde{Q} \end{bmatrix} := \begin{bmatrix} A & B \\ C & D \end{bmatrix} \begin{bmatrix} P \\ Q \end{bmatrix},
\]
with corresponding graph coordinate \( W = \widetilde{Q} \widetilde{P}^{-1} \), which is well-defined precisely on the affine coordinate patch where $\widetilde{P}$ is invertible (equivalently $A + BZ$ is invertible).

We emphasize that the orthogonality condition \( \begin{bmatrix} A & B \\ C & D \end{bmatrix} \in \mathrm{O}(n-1+m) \) is essential: it ensures that \( \widetilde{V} \) remains a Stiefel frame, so the correspondence remains valid for both \( Z \) and \( W \).

Using Lemma~\ref{lem:st-to-graph}, we write:
\begin{align*}
\ddot{Z} - 2\dot{Z} Z^\top (I_{n-1} + ZZ^\top )^{-1} \dot{Z} &= (\ddot{Q} - Z\ddot{P}) P^{-1}, \\
\ddot{W} - 2\dot{W} W^\top (I_{n-1} + WW^\top )^{-1} \dot{W} &= (\ddot{\widetilde{Q}} - W \ddot{\widetilde{P}}) \widetilde{P}^{-1}.
\end{align*}
Now compute:
\begin{align*}
    \ddot{\widetilde{Q}} \widetilde{P}^{-1} - W \ddot{\widetilde{P}} \widetilde{P}^{-1}
    &= (C \ddot{P} + D \ddot{Q})(A P + B Q)^{-1} - W (A \ddot{P} + B \ddot{Q})(A P + B Q)^{-1} \\
    &= (D - W B) \ddot{Q} P^{-1} (A + B Z)^{-1} + (C - W A) \ddot{P} P^{-1} (A + B Z)^{-1}.
\end{align*}
Using the identity \( C + D Z = W(A + B Z) \), we rearrange to get \( C - W A = - (D - W B) Z \), and thus
\[
    \ddot{\widetilde{Q}} \widetilde{P}^{-1} - W \ddot{\widetilde{P}} \widetilde{P}^{-1}
    = (D - W B)(\ddot{Q} - Z \ddot{P}) P^{-1} (A + B Z)^{-1}.
\]
This proves the claimed transformation law.
\end{proof}

\section{Proof of Theorem \ref{thm:entire_ODE}}\label{sec:entire_ODE}
We recall that $(B, \widetilde{\Lambda})$ satisfy that 
\[ \det\bigl(\cos(\widetilde{\Lambda} t)+B^\top \sin(\widetilde{\Lambda} t)\,\bigr) > 0 \]
for all \(t\). We claim that 
\begin{align}\label{Z-solution}
    Z(t) &=
    \bigl(-NB\cos(\widetilde{\Lambda} t) + N\sin(\widetilde{\Lambda} t)\bigr)\,
    \bigl(M\cos(\widetilde{\Lambda} t) + MB^\top \sin(\widetilde{\Lambda} t)\bigr)^{-1} 
\end{align}
is an entire solution of \eqref{eqn:slope_evolution} with the given initial conditions where
\[
    M = (I + B^\top B)^{-1/2},\quad N = (I + B B^\top )^{-1/2}.
\]
The initial conditions of $Z(t)$ in \eqref{Z-solution} are computed as follows: 
\[
    Z(0) = -\,NB\,M^{-1}
    = -\,(I + B B^\top )^{-1/2}\,B\,(I + B^\top B)^{1/2}.
\]
Using the singular value decomposition of $B$, one can check that $NB=BM$, and thus $Z(0)=-B$.

Rewriting \eqref{Z-solution} as
\[
    Z(t)\,M\,\bigl(\cos(\widetilde{\Lambda} t)+B^\top \sin(\widetilde{\Lambda} t)\bigr)
    =N\,\bigl(-B\cos(\widetilde{\Lambda} t)+\sin(\widetilde{\Lambda} t)\bigr),
\]
and differentiating at $t=0$ gives
\[
    \dot{Z}(0)\,M + Z(0)\,M B^\top \widetilde{\Lambda}
    = N\,\widetilde{\Lambda}.
\]
Using $Z(0)=-NBM^{-1}$ and $NB=BM$, we obtain
\[
    \dot{Z}(0)\,M = N (I+BB^\top)\,\widetilde{\Lambda},
    \qquad\text{i.e.}\qquad
    \dot{Z}(0)
    = N (I+BB^\top)\,\widetilde{\Lambda}\,M^{-1},
\]
which reduces to
\[
    \dot{Z}(0)
    = (I+BB^\top)^{1/2}\,\widetilde{\Lambda}\,(I+B^\top B)^{1/2}.
\]
The curve $Z(t)$ is clearly defined for all $t\in (-\infty, \infty)$. Next we show that $Z(t)$ defined by \eqref{Z-solution} is a solution of \eqref{eqn:slope_evolution}.
Let 
\[
    P = M\cos(\widetilde{\Lambda} t) + M B^\top \sin(\widetilde{\Lambda} t) \quad\text{ and }\quad
    Q = -NB\cos(\widetilde{\Lambda} t) + N\sin(\widetilde{\Lambda} t) .
\]
The size of $P$ is $m\times m$, and the size of $Q$ is $ (n-1)\times m$.  Note that $Z=QP^{-1}$.

Write 
\[ \begin{bmatrix} P(t)\\Q(t)\end{bmatrix}
=\begin{bmatrix}M & MB^\top \\-NB & N\end{bmatrix}
\begin{bmatrix}
\cos(\widetilde{\Lambda} t) \\
\sin(\widetilde{\Lambda} t) 
\end{bmatrix}.
\]
It is straightforward to check that the matrix $\begin{bmatrix}M & MB^\top \\-NB & N\end{bmatrix}
$ is an element of $O(n-1+m)$ and $\begin{bmatrix}
\cos\widetilde{\Lambda} t \\
\sin\widetilde{\Lambda} t 
\end{bmatrix}$ is a Grassmannian geodesic in the Stiefel model. Therefore, $\begin{bmatrix} P(t)\\Q(t)\end{bmatrix}$ is a Grassmannian geodesic in the Stiefel model by Proposition \ref{3.2} and $Z(t)=Q P^{-1}$ is a Grassmannian geodesic in  affine coordinates by Proposition \ref{3.5}.

\section{Proof of Theorem \ref{thm:entire_odd}}\label{sec:entire_odd}

By Theorem \ref{thm:entire_ODE}, it suffices to construct explicit pairs of $(n-1)\times m$ matrices 
$\widetilde{\Lambda}$ and $B$, such that 
\[
\det\bigl(\cos(\widetilde{\Lambda} t)+B^\top\sin(\widetilde{\Lambda} t)\bigr)
>0
\quad\text{for all }t ,
\]
when $n$ is odd.

In the special case where $n-1=m$ and $\widetilde{\Lambda}=\Lambda = I_m$ (so $m$ is even), the condition becomes
$\det\bigl(\cos t\,I_m+B^\top\sin t\bigr)>0$.  
When $\sin t=0$, this determinant is $1$.  
When $\sin t\neq 0$, it has the same sign as
$\det(\cot t\,I_m + B^\top)$, which is positive if $B$ has no real eigenvalues.  
Since $m$ is even, a real matrix with no real eigenvalues has characteristic polynomial positive on $\mathbb{R}$, hence the determinant is positive for all $t$.

We now generalize this to the case where $\widetilde{\Lambda}$ and $B$ are block–diagonal.  
Suppose there exist distinct real numbers $\widetilde{\lambda}_i$, $i=1,\ldots,k$, and even integers $d_i>0$ with
\[
\sum_{i=1}^k d_i = s \le \min(n-1,m)
\]
such that
\[
\Lambda =
\begin{bmatrix}
\widetilde{\lambda}_1 I_{d_1} & & \\
 & \ddots & \\
 & & \widetilde{\lambda}_k I_{d_k}
\end{bmatrix}, \qquad
B =
\begin{bmatrix}
B_1 & & \\
 & \ddots & \\
 & & B_k
\end{bmatrix},
\]
where each $B_i$ is a $d_i\times d_i$ real matrix with no real eigenvalues (equivalently, $\det(x I_{d_i} - B_i)>0$ for all real $x$ since $d_i$ is even).  
Embedding this $\Lambda$ as the top-left $s\times s$ block of $\widetilde{\Lambda}$ (and extending each $B_i$ by zeros to a block of the same location in $B$) yields a pair $(\widetilde{\Lambda},B)$ satisfying the desired positivity condition for all $t$.

\section{Entire special Lagrangian graphs and proof of Theorem \ref{thm:entire_SPL}}\label{sec:entire_SPL}
It suffices to show that the entire minimal graphs constructed in Theorem \ref{thm:entire_odd} are austere and then apply Harvey-Lawson's Theorem. We recall that a submanifold of Euclidean space is said to be {\it austere} in the sense of Harvey-Lawson \cite{HL82} if, for every normal direction $\nu$, the $k$-th elementary symmetric function of the eigenvalues of the second fundamental form in the direction of $\nu$ vanishes for all odd positive integers $k$ no greater the dimension of the submanifold. The condition is stronger than minimality, which corresponds to the vanishing of the first elementary symmetric function ($k=1$). We show that, for any minimal graph defined by $f^\alpha$ under the evolving plane ansatz \eqref{eqn:form}, the second fundamental form in any normal direction has at most two nonzero eigenvalues of opposite signs. In fact, these submanifolds are {\it simple austere} in the sense of Bryant (see Section 3 of \cite{Bryant}). For a suitable basis $n^\alpha, \alpha=1, \cdots, m$ of the normal bundle, the second fundamental form corresponding to $n^\alpha$ is precisely $\Hess f^\alpha$ in equation  \eqref{eqn:hessian}. Consequently, the second fundamental form in any normal direction is represented by a $n \times n $ matrix whose upper-left $(n-1)\times (n-1)$ block vanishes. Such a matrix has rank at most two. The minimality condition implies that its trace is zero, and therefore the two (possibly) non-zero eigenvalues sum to zero. Hence, the minimal graph is austere.

\appendix

\section{Lorentzian (Semi-Riemannian) setting}\label{app:Lorentzian}

In the appendix, we extend our construction to the Lorentzian (semi-Riemannian) setting. A spacelike graph over $\mathbb{R}^n$
in the Minkowski space $\mathbb{R}^{n,1}$
 with vanishing mean curvature is called a maximal graph. A classical theorem of Cheng and Yau \cite{CY} shows that there are no entire maximal graphs in any dimension, in sharp contrast to the Riemannian case, where the well-known Bernstein theorem holds up to dimension seven.

We construct graphs over a Minkowski base $\mathbb{R}^{n-1, 1}$ with Lorentzian induced metric (of signature $(n-1, 1)$) and vanishing mean curvature in the ambient semi-Riemannian space $\mathbb{R}^{n-1, m+1}$. More precisely, we consider graphs defined over $\mathbb{R}^{n-1,1}$ in the ambient $\mathbb{R}^{n-1,m+1}\cong\mathbb{R}^{n+m}$ endowed with metric
\[
    \langle (x,t,y),(x,t,y)\rangle = |x|^2 - t^2 - |y|^2,
    \qquad (x,t,y)\in\mathbb{R}^{n-1}\times\mathbb{R}\times\mathbb{R}^m.
\]

The coordinates $(x^1,\dots,x^{n-1},t)$ are used on the domain.

\begin{definition}[Evolving–plane ansatz, semi–Euclidean]
Let $\{z_i^\alpha(t)\}_{\alpha=1,\ldots,m;\ i=1,\ldots,n-1}$ be smooth functions. Define
\begin{equation}\label{eqn:form-L}
 f^\alpha(x,t)=\sum_{i=1}^{n-1} z_i^\alpha(t)\,x^i,
\end{equation}
and set $\vec{z}^{\,\alpha}(t)=(z_1^\alpha(t),\ldots,z_{n-1}^\alpha(t))^\top\in\mathbb{R}^{n-1}$.
\end{definition}

Similarly, the induced metric on the graph embedding
\[
    \Phi(x,t)=\bigl(x^1,\dots,x^{n-1},t,f^1(x,t),\dots,f^m(x,t)\bigr)
\]
has coefficients as follows:
\begin{equation}\label{eqn:g_L}
g=\begin{bmatrix}
I_{(n-1)\times(n-1)}-\sum_{\beta=1}^m \vec{z}^{\,\beta}(\vec{z}^{\,\beta})^\top
& \vrule & -\sum_{\beta=1}^m \langle \vec{x},\dot{\vec{z}}^{\,\beta}\rangle \,\vec{z}^{\,\beta}\\[1.5em]
\hline
 & \vrule & \\
-\sum_{\beta=1}^m \langle \vec{x},\dot{\vec{z}}^{\,\beta}\rangle (\vec{z}^{\,\beta})^\top
& \vrule & -1-\sum_{\beta=1}^m\bigl(\langle \vec{x},\dot{\vec{z}}^{\,\beta}\rangle\bigr)^2
\end{bmatrix}.
\end{equation}

\begin{theorem}[$H=0$ over $\mathbb{R}^{n-1,1}$ $\Leftrightarrow$ pseudo–Grassmannian geodesic]
If $f^\alpha$ is of the form \eqref{eqn:form-L}, then the image of $\Phi$ has vanishing mean curvature in $\mathbb{R}^{n-1,m+1}$ if and only if
\begin{equation}\label{eqn:slope_evolution_L}
    \det \begin{bmatrix}
    I_{(n-1)\times(n-1)}-\sum_{\beta=1}^m \vec{z}^{\,\beta}(\vec{z}^{\,\beta})^\top 
    & \vrule & 2\dot{\vec{z}}^{\,\alpha}\\[1.5em]
    \hline
    & \vrule & \\
    \sum_{\beta=1}^m \dot z_k^{\,\beta}(\vec{z}^{\,\beta})^\top 
    & \vrule & \ddot z_k^{\,\alpha}
\end{bmatrix}=0
\end{equation}
for \(k=1,\ldots,n-1\) and \(\alpha=1,\ldots,m\).
Equivalently, the matrix form is the follows:
\begin{equation}\label{ODE_L}
    \ddot Z+2\,\dot Z Z^\top (I_{n-1}-ZZ^\top)^{-1}\dot Z=0,
\end{equation}
where $Z=[z_i^\alpha]\in\mathbb{R}^{(n-1)\times m}$.
\end{theorem}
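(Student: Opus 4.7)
The plan is to follow the blueprint of the proof of Theorem~\ref{mss_ode} from Section~\ref{sec:Ansatz-MSS}, with careful bookkeeping of the sign changes coming from the Lorentzian signature of the ambient metric. First, I would establish a semi-Riemannian analog of Proposition~\ref{prop:minimal-basic}: the mean curvature of the graph vanishes if and only if $g^{ij}\,\partial_i\partial_j f^\alpha = 0$ for each $\alpha$, where $g=(g_{ij})$ is the induced Lorentzian metric \eqref{eqn:g_L}. This follows because the ambient semi-Euclidean metric is flat—its Christoffel symbols vanish in Cartesian coordinates—so the second fundamental form has the coordinate expression $II(\partial_i,\partial_j)=(\partial_i\partial_j\Phi)^\perp$. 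Since the only non-vanishing components of $\partial_i\partial_j\Phi$ lie in the $y^\alpha$-directions, and these span the normal bundle modulo tangent corrections, the vanishing of $g^{ij}\partial_i\partial_j f^\alpha$ for each $\alpha$ is precisely the vanishing mean curvature condition.

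Next, I would compute $\Delta_g f^\alpha := g^{ij}\partial_i\partial_j f^\alpha$ by the same Cramer's-rule manipulation as in the Riemannian case. The Euclidean Hessian $\Hess f^\alpha$ retains the block form \eqref{eqn:hessian} with vanishing top-left block, so
\[
(\det g)\,\Delta_g f^\alpha \;=\; 2\sum_{i=1}^{n-1} C_{i,n}\,\dot z_i^\alpha \;+\; C_{n,n}\sum_{k=1}^{n-1} x^k\, \ddot z_k^\alpha.
\]
The column-replacement trick then packages the right-hand side as a single determinant of the matrix obtained from $g$ by replacing its last column with $(2\dot{\vec z}^{\,\alpha},\,\langle \vec x,\ddot{\vec z}^{\,\alpha}\rangle)^\top$; using the Lorentzian form of $g$ in \eqref{eqn:g_L}, this determinant matches the one in \eqref{eqn:slope_evolution_L} up to an overall sign in the lower-left block coming from the off-diagonal of $g$. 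Since it is linear in $\vec x$ with vanishing constant term, requiring $\Delta_g f^\alpha = 0$ for all $\vec x$ is equivalent to the vanishing of its $x^k$-coefficient for every $k$ and $\alpha$, giving \eqref{eqn:slope_evolution_L}. Finally, applying Lemma~\ref{determinant} with $A = I_{n-1} - ZZ^\top$ reduces the scalar determinant condition to the matrix ODE \eqref{ODE_L}.

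The main point to watch is the sign bookkeeping. Two structural changes from the Riemannian case conspire to produce the sign flip in the final ODE: the top-left block of the induced metric becomes $I - ZZ^\top$ rather than $I + ZZ^\top$, and the off-diagonal block $g_{it}$ picks up a minus sign. Tracking both through the cofactor expansion and the block-matrix formula of Lemma~\ref{determinant} yields the $+2$ coefficient in $\ddot Z + 2\dot Z Z^\top (I-ZZ^\top)^{-1}\dot Z = 0$, in contrast to the $-2$ appearing in the Riemannian equation \eqref{eqn:slope_evolution}. One must also assume $I - ZZ^\top$ is invertible throughout, which amounts to the requirement that the induced metric remain non-degenerate (i.e., truly of signature $(n-1,1)$); this is where the Lorentzian setting genuinely differs from the Riemannian one, in which $I + ZZ^\top$ is automatically positive-definite.
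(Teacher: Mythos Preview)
Your proposal is correct and follows exactly the approach the paper intends: the appendix does not supply a separate proof for this theorem, implicitly relying on the reader to rerun the Section~\ref{sec:Ansatz-MSS} argument with the Lorentzian metric \eqref{eqn:g_L} in place of \eqref{eqn:g}. You carry this out faithfully, and your observation that the lower-left block of the determinant naturally inherits a minus sign from the off-diagonal of $g$ (so that Lemma~\ref{determinant} with $A=I-ZZ^\top$ produces the $+2$ coefficient in \eqref{ODE_L}) is exactly the sign bookkeeping the paper leaves to the reader.
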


\begin{remark}
Equation \eqref{ODE_L} is the geodesic equation on the pseudo–Grassmannian $\Gr^+_{n-1}(n-1,m+1)$ in affine coordinates. This is the pseudo-Riemannian analogue of \eqref{eqn:slope_evolution}, obtained by replacing $I+ZZ^\top$ with $I-ZZ^\top$.
\end{remark}
Consider the following initial conditions for solutions to \eqref{ODE_L} 
\[
    Z(0) = O_{(n-1) \times m}, \qquad 
    \dot{Z}(0) = 
    \begin{bmatrix}
        \Lambda &0\\[2pt]
        0&0
    \end{bmatrix}.
\]
for $\Lambda=\diag(\lambda_1\ldots\lambda_r)$, $0\le r\le\min(n-1,m)$ and $0<\lambda_1\le\cdots\le\lambda_r$. 

Denoting $\tanh(\Lambda t)=\diag(\tanh(\lambda_1 t),\ldots,\tanh(\lambda_r t))$, the corresponding explicit solution to \eqref{ODE_L} is 
\[
Z(t) = 
\begin{bmatrix}
\tanh(\Lambda t)& 0\\0& 0
\end{bmatrix}.
\]

\begin{proposition}[Global Lorentzian property of the $\tanh$ family]
For $Z(t)$ as above, the induced metric of $\Phi$ has signature $(+,\cdots,+,-)$ for all $(x,t)$.
\end{proposition}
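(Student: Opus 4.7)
The plan is to read off the signature of the induced metric directly from its block form \eqref{eqn:g_L} via a Schur--complement congruence. Writing the metric as $g=\begin{bmatrix} A & B\\ B^\top & D\end{bmatrix}$ with $A=I_{n-1}-ZZ^\top$, the identity
\[
\begin{bmatrix} A & B\\ B^\top & D\end{bmatrix}
=\begin{bmatrix} I & 0\\ B^\top A^{-1} & 1\end{bmatrix}
\begin{bmatrix} A & 0\\ 0 & D-B^\top A^{-1}B\end{bmatrix}
\begin{bmatrix} I & A^{-1}B\\ 0 & 1\end{bmatrix}
\]
is a congruence transformation whenever $A$ is invertible, so by Sylvester's law of inertia the signature of $g$ is the union of the signatures of $A$ and of the scalar Schur complement $D-B^\top A^{-1}B$. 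The proposition therefore reduces to verifying two pointwise inequalities: $A\succ 0$ and $D-B^\top A^{-1}B<0$.

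For the $\tanh$ family one has $\vec{z}^{\,\alpha}(t)=\tanh(\lambda_\alpha t)\,e_\alpha$ for $\alpha\le r$ and $\vec{z}^{\,\alpha}=0$ for $\alpha>r$. Hence $ZZ^\top=\diag\!\bigl(\tanh^2(\lambda_1 t),\ldots,\tanh^2(\lambda_r t),0,\ldots,0\bigr)$, so $A$ is diagonal with positive entries $\sech^2(\lambda_i t)$ for $i\le r$ and $1$ for $i>r$. This gives $A\succ 0$ and contributes the $n-1$ plus signs.

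Next I compute the Schur complement. From $\dot{\vec{z}}^{\,\beta}=\lambda_\beta \sech^2(\lambda_\beta t)\,e_\beta$ one gets $\langle \vec{x},\dot{\vec{z}}^{\,\beta}\rangle=\lambda_\beta \sech^2(\lambda_\beta t)\,x^\beta$, so the off-diagonal column $B$ has components $B_i=-\lambda_i \sech^2(\lambda_i t)\tanh(\lambda_i t)\,x^i$ for $i\le r$ and $B_i=0$ otherwise. Using $A_{ii}^{-1}=\cosh^2(\lambda_i t)$, a direct computation yields
\[
B^\top A^{-1} B=\sum_{i=1}^r \lambda_i^2\,\sech^2(\lambda_i t)\tanh^2(\lambda_i t)\,(x^i)^2,
\]
and combining with $D=-1-\sum_{i=1}^r \lambda_i^2\,\sech^4(\lambda_i t)\,(x^i)^2$ and the Pythagorean identity $\sech^2+\tanh^2=1$ the expression collapses to
\[
D-B^\top A^{-1}B=-1-\sum_{i=1}^r \lambda_i^2\,\sech^2(\lambda_i t)\,(x^i)^2<0.
\]

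The only place where genuine algebraic cancellation occurs is the final simplification $\sech^4+\sech^2\tanh^2=\sech^2$ in the Schur complement; everything else is bookkeeping on diagonal matrices. Once this negativity is established, the congruence above shows that $g$ has signature $(\underbrace{+,\ldots,+}_{n-1},-)$ at every point $(x,t)\in\mathbb{R}^{n-1}\times\mathbb{R}$, completing the proof.
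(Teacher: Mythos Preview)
Your proof is correct and follows essentially the same approach as the paper: compute the diagonal top-left block $A=I_{n-1}-ZZ^\top\succ 0$, compute the scalar Schur complement $D-B^\top A^{-1}B=-1-\sum_{i=1}^r \lambda_i^2\sech^2(\lambda_i t)(x^i)^2<0$ via the identity $\sech^2+\tanh^2=1$, and conclude the signature is $(n-1,1)$. You are slightly more explicit about the congruence and Sylvester's law of inertia than the paper, which merely states the block conclusion and additionally records $\det g<0$ as a confirmation, but the substance is identical.
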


\begin{proof}
For $1\le i\le r$, $z_i^{\,i}=\tanh(\lambda_i t)$ and $\dot{z}_i^{\,i}=\lambda_i\sech^2(\lambda_i t)$; all other $z_i^\alpha$ vanish.  It follows that
\[
    \sum_{\beta=1}^m \vec{z}^{\,\beta}(\vec{z}^{\,\beta})^\top
    = \diag(\tanh^2(\lambda_1 t),\dots,\tanh^2(\lambda_r t),0,\dots,0),
\]
and hence the top-left block of $g=\begin{bmatrix} A & B \\ C & D \end{bmatrix}$ in \eqref{eqn:g_L} is the positive definite matrix
\[
    A=\diag(\sech^2(\lambda_1 t),\dots,\sech^2(\lambda_r t),1,\dots,1).
\]

By using $\sech^2+\tanh^2=1$, the corresponding $D - C A^{-1} B$ of $g$ is
\[
    -1-\sum_{i=1}^r \lambda_i^2 (x^i)^2 \sech^2(\lambda_i t) < 0.
\]
Thus, $A$ has $n-1$ positive eigenvalues, $D - C A^{-1} B$ is negative, and $g$ has Lorentzian signature globally. Moreover,
\[
\det g = -\Big(\prod_{i=1}^r \sech^2(\lambda_i t)\Big)\Big(1+\sum_{i=1}^r \lambda_i^2(x^i)^2\sech^2(\lambda_i t)\Big)<0
\]
confirms that the induced metric is Lorentzian of index $1$.
\end{proof}

\end{document}